\newtheorem{theorem}{Theorem}
\numberwithin{theorem}{section}
\newtheorem{lemma}[theorem]{Lemma}
\newtheorem{proposition}[theorem]{Proposition}
\newtheorem{definition}[theorem]{Definition}
\newcommand{\atrs}{\mathbf{ATR_0^{\operatorname{set}}}}
\newcommand{\prs}{\mathbf{PRS\omega}}
\newcommand{\rca}{\mathbf{RCA_0}}
\newcommand{\supp}{\operatorname{supp}}
\newcommand{\bh}{\operatorname{BH}}
\newcommand{\rng}{\operatorname{rng}}
\newcommand{\lef}{<^{\operatorname{fin}}}
\title[Bachmann-Howard Fixed Points]{A Categorical Construction of Bachmann-Howard Fixed Points\footnotemark[1]}
\author{Anton Freund}
\address{Fachbereich Mathematik, Technische Universit\"at Darmstadt, Schlossgartenstr.~7, 64289~Darmstadt, Germany}
\email{freund@mathematik.tu-darmstadt.de}
\begin{document}

\subjclass[2010]{03B30, 03D60, 03F15}
\maketitle
{\let\thefootnote\relax\footnotetext{\copyright~2019. This is the accepted version of the following article: A Categorical Construction of Bachmann-Howard Fixed Points, Bulletin of the London Mathematical Society (in press), which has been published in final form at \href{https://doi.org/10.1112/blms.12285}{doi:10.1112/blms.12285}.}

\begin{abstract}
Peter Aczel has given a categorical construction for fixed points of normal functors, i.e.~dilators which preserve initial segments. For a general dilator $X\mapsto T_X$ we cannot expect to obtain a well-founded fixed point, as the order type of $T_X$ may always exceed the order type of $X$. In the present paper we show how to construct a Bachmann-Howard fixed point of $T$, i.e.~an order $\bh(T)$ with an ``almost'' order preserving collapse $\vartheta:T_{\bh(T)}\rightarrow\bh(T)$. Building on previous work, we show that $\Pi^1_1$-comprehension is equivalent to the assertion that $\bh(T)$ is well-founded for any dilator $T$.
\end{abstract}

\section{Introduction: Set existence and well-foundedness}

The present paper contributes to a research program known as reverse mathematics (see~\cite{simpson09}), which aims to answer the following question: Which infinite sets are indispensable for the proof of a given mathematical theorem?

To make the previous question precise one can consider $\omega$-models, which are defined as collections $\mathcal M\subseteq\mathcal P(\mathbb N)$ of subsets of the natural numbers. For such a model we can ask whether it satisfies certain statements about finite objects (coded by natural numbers) and infinite collections of such objects (represented by subsets of $\mathbb N$). For example, the formula
\begin{equation*}
 \exists_{X\subseteq\mathbb N}\forall_{n\in\mathbb N}(n\in X\leftrightarrow\exists_{m\in\mathbb N}\,m+m=n)
\end{equation*}
holds in an $\omega$-model $\mathcal M$ if, and only if, $\mathcal M$ contains the set of even numbers. More generally, we evaluate a formula in an $\omega$-model $\mathcal M$ by replacing all quantifiers $\exists_{X\subseteq\mathbb N}$ and $\forall_{X\subseteq\mathbb N}$ with $\exists_{X\in\mathcal M}$ and $\forall_{X\in\mathcal M}$, respectively (the quantifiers $\exists_{n\in\mathbb N}$ and $\forall_{n\in\mathbb N}$ remain unchanged).

An $\omega$-model satisfies recursive comprehension if it contains any set that is computable relative to some of its members. Classical results of reverse mathematics show that an $\omega$-model of recursive comprehension does always satisfy the intermediate value theorem, but not necessarily the Bolzano-Weierstra{\ss} theorem (see~\cite[Theorems~II.6.6~and~III.2.2]{simpson09}). In the following we assume that all $\omega$-models satisfy recursive comprehension.

It turns out that the Bolzano-Weierstra{\ss} theorem corresponds to a stonger comprehension principle: Roughly speaking, a formula is called arithmetical if it does not involve quantification over subsets of $\mathbb N$ (e.\,g.~the above formula is not arithmetical, due to the quantifier $\exists_{X\subseteq\mathbb N}$, but the subformula $\exists_{m\in\mathbb N}\,m+m=n$ is). An $\omega$-model $\mathcal M$ satisfies arithmetical comprehension if it contains any set that can be defined by an arithmetical formula with parameters from $\mathcal M$. Note that arithmetical comprehension implies recursive comprehension.  The classical result that we have cited above does actually show the following stronger assertion: An $\omega$-model satisfies the Bolzano-Weierstra{\ss} theorem if, and only if, it satisfies arithmetical comprehension (in fact this holds for a more general class of models, but $\omega$-models are particularly intuitive). Hence arithmatically defined sets are indispensable for this theorem.

Our goal is to relate set existence to the notion of well-foundedness, which plays an important role throughout mathematics. The idea is to consider transformations between well-orders, which are known as (type-one) well-ordering principles. This approach was first realized by Girard~\cite{girard87}, who considered the transformation of an order $X=(X,\leq_X)$ into the set
\begin{equation*}
\omega^X=\{\langle x_0,\dots,x_{n-1}\rangle\,|\,x_{n-1}\leq_X\dots\leq_X x_0\}
\end{equation*}
of finite descending sequences with entries from $X$, ordered lexicographically (think of Cantor normal forms). Girard has shown that an $\omega$-model satisfies arithmetical comprehension if, and only if, it satisfies the statement that ``$\omega^X$ is well-founded for any well-order~$X$". Let us point out that the order $\omega^X$ from the previous paragraph is computable relative to~$X$, so that $X\in\mathcal M$ implies $\omega^X\in\mathcal M$. In other words, the strength of the well-ordering principle lies in the preservation of well-foundedness, not in the existence of the set $\omega^X$.

Many set existence principles have been characterized in terms of type-one well-ordering principles (see~\cite{freund-equivalence} for a list of references). However, such a characterization cannot be given for one particularly important set existence principle: An $\omega$-model $\mathcal M$ satisfies $\Pi^1_1$-comprehension if it contains any set of the form
\begin{equation}\label{eq:Pi11-comp}
\{n\in\mathbb N\,|\,\forall_{X\in\mathcal M}\,\varphi(n,X)\},
\end{equation}
where $\varphi$ is an arithmetical formula with parameters from $\mathcal M$. The point is that any type-one well-ordering principle can be expressed by a formula $\forall_{X\subseteq\mathbb N}\exists_{Y\subseteq\mathbb N}\,\varphi$, where $\varphi$ is arithmetical. It is known that $\Pi^1_1$-comprehension cannot be equivalent to a formula of this form.

Why is $\Pi^1_1$-comprehension interesting? Firstly, $\Pi^1_1$-comprehension is equivalent to important mathematical theorems, such as the Cantor-Bendixson theorem or the result that any countable Abelian group can be constructed as the direct sum of a divisible and a reduced group (see~\cite[Theorems~VI.1.6~and~VI.4.1]{simpson09}). Secondly, there is a fundamental difference between $\Pi^1_1$-comprehension and weaker principles: An $\omega$-model
\begin{equation*}
 \mathcal M=\{\mathcal M_0,\mathcal M_1,\mathcal M_2,\dots\}
\end{equation*}
of arithmetical comprehension can be built ``from below'', by setting $\mathcal M_{k+1}=\{n\in\mathbb N\,|\,\varphi_k(n)\}$ for some systematic enumeration $\varphi_0,\varphi_1,\dots$ of the arithmetical formulas, where $\varphi_k$ may contain the parameters $\mathcal M_0,\dots,\mathcal M_k$. An $\omega$-model of $\Pi^1_1$-comprehension cannot be built in the same way, since the quantifier $\forall_{X\in\mathcal M}$ in (\ref{eq:Pi11-comp}) changes its range whenever we extend $\mathcal M$ by a set~$\mathcal M_{k+1}$, so that the construction of $\mathcal M_0,\dots,\mathcal M_k$ may be invalidated at this point. One can describe this difference by saying that arithmetical comprehension is predicative, while $\Pi^1_1$-comprehension is impredicative (see~\cite{feferman05} for a deeper discussion of these notions).

Rathjen~\cite{rathjen-wops-chicago,rathjen-atr} and Montalb\'an~\cite{montalban-draft,montalban-open-problems} have conjectured that $\Pi^1_1$-comprehension can be characterized by a type-two well-ordering principle. Such a principle should take a type-one well-ordering principle $X\mapsto T_X$ as input and yield a well-order as output. In~\cite{freund-equivalence} (which is based on the author's PhD thesis~\cite{freund-thesis} and an earlier arXiv preprint~\cite{freund-bh-preprint}) we have made a significant step towards this conjecture. The basic idea was that the output should be a certain type of fixed-point of the input $X\mapsto T_X$.

What is the correct notion of fixed point? Let us first observe that we cannot expect to obtain well-founded fixed points in the obvious sense: Consider the transformation 
\begin{equation*}
 X\mapsto T_X=X\cup\{\top\}
\end{equation*}
that extends a given order~$X$ by a new maximal element $\top$. Let $X$ be an arbitrary well-order. Then there is a unique ordinal~$\alpha$ with $X\cong\alpha$. Clearly we have $T_X\cong\alpha+1$. By the uniqueness of $\alpha$ we can conclude
\begin{equation*}
 T_X\not\cong X.
\end{equation*}
In~\cite{freund-equivalence} we have formulated precise conditions under which a function
\begin{equation*}
 \vartheta:T_X\rightarrow X
\end{equation*}
can be considered as ``almost'' order preserving. If such a function exists, then $X$ is called a Bachmann-Howard fixed point of the transformation $T$. In fact, these notions are only defined if $T$ is a particularly uniform transformation of orders, namely a dilator in the sense of Girard~\cite{girard-pi2}. Full details of all relevant definitions will be recalled in the following section. Relying on these notions, we can now state the main result of~\cite{freund-equivalence}:
\begin{theorem}\label{thm:abstract-bhp}
 The following are equivalent over the theory $\atrs$:
 \begin{enumerate}[label=(\roman*)]
  \item The principle of $\Pi^1_1$-comprehension.
  \item The statement that every dilator has a well-founded Bachmann-Howard fixed point.
 \end{enumerate}
\end{theorem}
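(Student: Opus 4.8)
The organizing idea is that a well-founded Bachmann-Howard fixed point of a dilator $T$ is nothing but the ordinal incarnation, read through the collapse $\vartheta$, of a transitive admissible set closed under $T$. Since $\Pi^1_1$-comprehension is equivalent over $\atrs$ to the assertion that every parameter is contained in a transitive admissible set (the set-theoretic form of ``every hyperjump exists''), both implications reduce to making this correspondence precise. Throughout I would build $\bh(T)$ and its collapse already in $\atrs$, as a notation system of formal $\vartheta$-terms, so that the genuine set-existence strength is isolated entirely in the well-foundedness claims.

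For (i)$\Rightarrow$(ii), fix a dilator $T$ and construct $\bh(T)$ together with $\vartheta\colon T_{\bh(T)}\to\bh(T)$ by the usual inductive closure of a set of collapsing terms under the constructors supplied by $T$, reading off the order and the support function $\supp$ from the dilator structure. That $\vartheta$ is ``almost'' order preserving in the sense recalled in the next section is then a combinatorial induction on the build-up of terms, requiring nothing beyond $\atrs$. Well-foundedness is where (i) enters: $\Pi^1_1$-comprehension furnishes, over the parameter coding $T$, a transitive admissible set $A$ closed under the action of $T$, and transitivity makes well-foundedness of orders in $A$ absolute. Inside $A$ one defines, by recursion along the term build-up, an honest order-preserving map of $\bh(T)$ into the ordinals of $A$, sending each $\vartheta$-term below the height of $A$; the existence of this embedding into the genuinely well-founded $A$ forces $\bh(T)$ to be well-founded.

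For the converse (ii)$\Rightarrow$(i), I would run the correspondence backwards. By the reduction above it suffices, given an arbitrary parameter, to produce a transitive admissible set containing it. I would design a dilator $T$ whose collapsing terms encode the stages of the hyperjump over the parameter, arranged so that the almost-order-preserving collapse of a well-founded $\bh(T)$ delivers an order-preserving image of the intended admissible ordinal. Well-foundedness of $\bh(T)$, granted by (ii), then certifies that this ordinal is a genuine set and that the associated admissible set exists; feeding it into the equivalence between admissibility and $\Pi^1_1$-comprehension recovers (i).

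The hard part will be the converse, and within it the design of $T$ so that well-foundedness of its Bachmann-Howard fixed point does not merely follow from $\Pi^1_1$-comprehension but actually forces it. The tension is structural: as the excerpt notes, $\Pi^1_1$-comprehension is impredicative precisely because the quantifier $\forall_{X}$ ranges over a totality that shifts as the model grows, and no type-one principle of the form $\forall_{X}\exists_{Y}\,\varphi$ can capture this. The construction must therefore exploit the \emph{fixed-point} character of $\bh(T)$, arranging that the collapse $\vartheta\colon T_{\bh(T)}\to\bh(T)$ internalizes the wandering quantifier, so that a single well-founded $\bh(T)$ encodes a self-referential admissible closure rather than a predicatively iterated hierarchy. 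Pinning down this coding, and verifying that each step stays inside $\atrs$, is the main obstacle; by contrast the forward collapsing lemma is a laborious but routine recursion once the admissible set is in hand.
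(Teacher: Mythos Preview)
This theorem is not proved in the present paper. It is stated in the introduction as the main result of~\cite{freund-equivalence} (based on~\cite{freund-thesis,freund-bh-preprint}) and then used as a black box: the only place it reappears is in the short proof of Theorem~\ref{thm:pred-bhp}, where the paper writes ``In view of Theorem~\ref{thm:abstract-bhp} (which was established in~\cite{freund-equivalence}\dots) it suffices to show that the abstract and the predicative Bachmann-Howard principle are equivalent.'' There is therefore no proof here against which your proposal can be compared.

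Your outline also inverts the dependency structure of the paper. You propose to build $\bh(T)$ as a term system already in $\atrs$ and then argue about its well-foundedness; but that construction is precisely the content of Sections~3--4, and the paper uses it to derive Theorem~\ref{thm:pred-bhp} \emph{from} Theorem~\ref{thm:abstract-bhp}, not to establish Theorem~\ref{thm:abstract-bhp} itself. On the substance: your direction (i)$\Rightarrow$(ii) via a transitive admissible set over the parameter is the right shape, but for (ii)$\Rightarrow$(i) what you have written is a statement of intent rather than an argument. Designing a specific dilator so that well-foundedness of any Bachmann-Howard fixed point forces the existence of an admissible set, and verifying this over $\atrs$, is the entire content of the hard direction in~\cite{freund-equivalence}; none of that work is reproduced or even sketched in the present paper, and your proposal does not supply it either.
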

Here $\atrs$ is Simpson's~\cite{simpson82} set-theoretic version of arithmetical transfinite recursion (see the following section for details). The point is that $\atrs$ is a predicative theory (in the broad sense). In this crucial respect the theory in the ``background'' is simpler than the principle of $\Pi^1_1$-comprehension, which is characterized by the theorem.

So what is new in the present paper? We will present an explicit construction, which transforms a given dilator $T$ into a Bachmann-Howard fixed point $\bh(T)$ of $T$ (see Theorem~\ref{thm:bhT-fixed point}). More precisely, the order $\bh(T)$ can be constructed by a primitive recursive set function, as explained at the beginning of the following section. We will also show that $\bh(T)$ is minimal, in the sense that it can be embedded into any other Bachmann-Howard fixed point of~$T$ (see Theorem~\ref{thm:bhT-minimal}). It follows that $\bh(T)$ is well-founded if, and only if, the dilator $T$ has some well-founded Bachmann-Howard fixed point. This improves Theorem~\ref{thm:abstract-bhp} as follows:
\begin{theorem}\label{thm:pred-bhp}
 The following are equivalent over the theory $\atrs$:
 \begin{enumerate}[label=(\roman*)]
  \item The principle of $\Pi^1_1$-comprehension.
  \item The statement that the order $\bh(T)$ is well-founded for every dilator~$T$.
 \end{enumerate}
\end{theorem}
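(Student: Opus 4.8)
The plan is to obtain Theorem~\ref{thm:pred-bhp} as a direct consequence of the previously established Theorem~\ref{thm:abstract-bhp}, by slotting the new construction $\bh(T)$ in between the two statements. The guiding observation is that statement~(ii) of the present theorem is, up to the minimality result, just a concrete rephrasing of statement~(ii) of Theorem~\ref{thm:abstract-bhp}: on the one hand $\bh(T)$ is \emph{a} Bachmann-Howard fixed point of~$T$ (Theorem~\ref{thm:bhT-fixed point}), and on the other hand it is the \emph{minimal} one (Theorem~\ref{thm:bhT-minimal}). I would therefore prove the two implications separately, reducing each to Theorem~\ref{thm:abstract-bhp}, and carry out the whole argument over the base theory $\atrs$.

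For the implication from~(i) to~(ii), I would assume $\Pi^1_1$-comprehension. By the equivalence in Theorem~\ref{thm:abstract-bhp}, every dilator~$T$ then admits \emph{some} well-founded Bachmann-Howard fixed point~$X$. Invoking the minimality result (Theorem~\ref{thm:bhT-minimal}), the order $\bh(T)$ embeds into~$X$ via an order-preserving map. Since the domain of an order-preserving map into a well-founded order is itself well-founded, I conclude that $\bh(T)$ is well-founded, as required. For the converse I would argue even more directly: assuming that $\bh(T)$ is well-founded for every dilator~$T$, Theorem~\ref{thm:bhT-fixed point} tells us that this $\bh(T)$ is a Bachmann-Howard fixed point of~$T$, so $\bh(T)$ itself witnesses that $T$ possesses a well-founded Bachmann-Howard fixed point. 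Thus statement~(ii) of Theorem~\ref{thm:abstract-bhp} holds, and that theorem yields $\Pi^1_1$-comprehension.

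The genuine mathematical content lies not in this deduction but in the construction of $\bh(T)$ together with its two defining properties—being a Bachmann-Howard fixed point and being minimal—which are the subject of Theorems~\ref{thm:bhT-fixed point} and~\ref{thm:bhT-minimal} and are established elsewhere in the paper; granted those, the present theorem is essentially immediate. The only point that will require a little care is the transfer of well-foundedness across the minimal embedding in the first implication, since this must be verified inside the weak base theory $\atrs$ rather than merely semantically in the intended $\omega$-models. The relevant principle—that if $f$ is order-preserving and its codomain is well-founded then its domain is well-founded, because any infinite descending sequence in the domain would be carried to one in the codomain—is already provable in $\rca$, hence available in $\atrs$, so no additional comprehension is smuggled in and the argument goes through cleanly.
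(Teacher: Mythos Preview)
Your proposal is correct and matches the paper's own proof essentially step for step: reduce to Theorem~\ref{thm:abstract-bhp} by showing that the abstract and predicative Bachmann-Howard principles are equivalent, using Theorem~\ref{thm:bhT-minimal} (embedding into a well-founded fixed point) for one direction and Theorem~\ref{thm:bhT-fixed point} ($\bh(T)$ is itself a fixed point) for the other. Your additional remark that the transfer of well-foundedness along an embedding is available already in $\rca$, hence in $\atrs$, is a welcome clarification that the paper leaves implicit.
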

The crucial point is that $\bh(T)$ is constructed ``from below'', i.\,e.~in a predicative way. Hence the impredicative principle of $\Pi^1_1$-comprehension is reduced to a predicative construction and a statement about the preservation of well-foundedness, over a predicative background theory. Statement~(ii) from Theorem~\ref{thm:abstract-bhp} will be called the abstract Bachmann-Howard principle, since it asserts the existence of a fixed point without constructing one. Statement~(ii) of Theorem~\ref{thm:pred-bhp} will be called the predicative Bachmann-Howard principle.

To conclude this introduction we explain how to construct the Bachmann-Howard fixed point~$\bh(T)$. Our starting point is a construction due to Aczel~\cite{aczel-phd,aczel-normal-functors}: Given an endofunctor~$T$ on the category of linear orders, let $X$ be the direct limit of the diagram
\begin{equation*}
 X_0:=\emptyset\xrightarrow{\mathmakebox[1cm]{\iota_0}}X_1:=T_{X_0}\xrightarrow{\mathmakebox[1cm]{\iota_1:=T_{\iota_0}}}X_2:=T_{X_1}\xrightarrow{\mathmakebox[1cm]{}}\cdots.
\end{equation*}
If $T$ preserves direct limits, then we have $T_X\cong X$. Aczel has shown that $X$ is well-founded for an important class of functors, which correspond to normal functions on the ordinals (the crucial condition requires that the range of $T_f:T_X\rightarrow T_Y$ is an initial segment of the order $T_Y$ whenever the range of $f:X\rightarrow Y$ is an initial segment of $Y$).

The transformation $X\mapsto T_X=X\cup\{\top\}$ that we have considered above is readily extended into a functor (set $T_f(\top)=\top$). We have seen that $T_X\cong X$ cannot hold for any well-order~$X$. Indeed, the fixed point that results from Aczel's construction is isomorphic to the negative integers. In order to obtain well-founded Bachmann-Howard fixed points we will modify Aczel's construction as follows: Given a linear order $X$, one can define an order $\vartheta_T(X)$ with an ``almost'' order preserving collapse $\vartheta_X:T_X\rightarrow\vartheta_T(X)$. In fact the situation is somewhat more complicated: To define the order relation on $\vartheta_T(X)$ we already need a function $\iota_X:X\rightarrow\vartheta_T(X)$ between the underlying sets. After the order has been defined we will want $\iota_X$ to be an order embedding. We will introduce a notion of (good) Bachmann-Howard system to keep track of this requirement. Relying on this notion, we will construct a diagram of the form
\begin{equation*}
 \begin{tikzcd}
  & T_{X_0}\arrow{d}{\vartheta_{X_0}}\arrow{r}{T_{\iota_{X_0}}} & T_{X_1}\arrow{d}{\vartheta_{X_1}}\arrow[r] & \cdots\\
  X_0:=\emptyset\arrow{r}{\iota_{X_0}} & X_1:=\vartheta_T(X_0)\arrow{r}{\iota_{X_1}} & X_2:=\vartheta_T(X_1)\arrow[r] & \cdots.
 \end{tikzcd}
\end{equation*}
The order $\bh(T)$ will be defined as the direct limit of the orders~$X_n$. To establish that $\bh(T)$ is a Bachmann-Howard fixed point of~$T$ we will show that the functions $\vartheta_{X_n}:T_{X_n}\rightarrow X_{n+1}$ glue to an almost order preserving function $\vartheta:T_{\bh(T)}\rightarrow\bh(T)$.

Finally, let us mention that it is possible to construct Bachmann-Howard fixed points in an even weaker setting: In~\cite{freund-computable} we show that $\bh(T)$ can be represented by an ordinal notation system~$\vartheta(T)$, which is computable relative to (a suitable representation of) $T$. The assertion that $\vartheta(T)$ is well-founded for any dilator $T$ will be called the computable Bachmann-Howard principle. According to~\cite[Theorem~4.6]{freund-computable} this principle is still equivalent to $\Pi^1_1$-comprehension, even over the base theory $\rca$. The construction in~\cite{freund-computable} is considerably more technical than the one in the present paper, not least because it depends on a representation of dilators in second-order arithmetic. It is often pointed out that ordinal notation systems such as $\vartheta(T)$ are difficult to understand from a purely syntactical standpoint. The present paper provides a transparent semantical construction of $\bh(T)$, which is crucial for the understanding of Bachmann-Howard fixed points.

The author would like to point out that parts of this paper are based on Sections~2.2 and~2.4 of his PhD thesis~\cite{freund-thesis}.

\section{Preliminaries: Dilators and collapsing functions}

In the present section we give precise definitions of notions that have been mentioned in the introduction. We will be most interested in dilators and their Bachmann-Howard fixed points. At the end of the section we discuss the meta theory in which the present paper is supposed to be formalized.

Dilators are particularly uniform endofunctors on the category of linear orders, with order embeddings as morphisms. In order to express the uniformity condition we consider the finite subset functor on the category of sets, which is given by
\begin{align*}
 [X]^{<\omega}&=\text{``the set of finite subsets of $X$''},\\
 [f]^{<\omega}(a)&=\{f(x)\,|\,x\in a\}.
\end{align*}
We will also apply $[\cdot]^{<\omega}$ to linear orders, omitting the forgetful functor to their underlying sets. Conversely, a subset of a linear order will often be considered as a suborder. The following notion is essentially due to Girard~\cite{girard-pi2}:

\begin{definition}\label{def:prae-dilator}
 A prae-dilator consists of
 \begin{enumerate}[label=(\roman*)]
  \item an endofunctor $X\mapsto T_X$ of linear orders and
  \item a natural transformation $\supp^T:T\Rightarrow[\cdot]^{<\omega}$ that computes supports, in the following sense: For any linear order $X$ and any element $\sigma\in T_X$ we have $\sigma\in\rng(T_{\iota_\sigma})$, where $\iota_\sigma:\supp^T_X(\sigma)\hookrightarrow X$ is the inclusion.
 \end{enumerate}
 If $T_X$ is well-founded for any well-order $X$, then $(T,\supp^T)$ is called a dilator.
\end{definition}

We point out that our notion of prae-dilator is slightly different from Girard's notion of pre-dilator, which involves an additional monotonicity condition. The latter is automatic for well-orders, so that the difference vanishes in the case of dilators. Also, Girard's definition does not involve the natural transformation $\supp^T$. Instead, it demands that $T$ preserves direct limits and pull-backs. It is not hard to see that the two definitions are equivalent (see~\cite[Remark~2.2.2]{freund-thesis}). Nevertheless it will be very useful to make the supports explicit.

To say when a function is ``almost'' order preserving we need the following notation: Given a linear order $(X,<_X)$, we define a preorder $\lef_X$ on $[X]^{<\omega}$ by stipulating
\begin{equation*}
 a\lef_X b\quad:\Leftrightarrow\quad\text{``for any $s\in a$ there is a $t\in b$ with $s<_X t$''.}
\end{equation*}
We will write $s\lef_X b$ and $a\lef_X t$ rather than $\{s\}\lef_X b$ resp.~$a\lef_X \{t\}$ for singletons. The relation $\leq^{\operatorname{fin}}_X$ is defined in the same way. The following notion was introduced in~\cite{freund-equivalence}, based on the author's PhD thesis~\cite{freund-thesis} and an earlier arXiv preprint~\cite{freund-bh-preprint}. It is inspired by the definition of the Bachmann-Howard ordinal, in particular by the variant due to Rathjen (cf.~\cite[Section~1]{rathjen-weiermann-kruskal}).

\begin{definition}\label{def:bachmann-howard-collapse}
 Consider a prae-dilator $(T,\supp^T)$ and an order $X$. A function
 \begin{equation*}
  \vartheta:T_X\rightarrow X
 \end{equation*}
 is called a Bachmann-Howard collapse if the following holds for all $\sigma,\tau\in T_X$:
 \begin{enumerate}[label=(\roman*)]
  \item If we have $\sigma<_{T_X}\tau$ and $\supp^T_X(\sigma)\lef_X\vartheta(\tau)$, then we have $\vartheta(\sigma)<_X\vartheta(t)$.
  \item We have $\supp^T_X(\sigma)\lef_X\vartheta(\sigma)$.
 \end{enumerate}
 If such a function exists, then $X$ is called a Bachmann-Howard fixed point of $T$.
\end{definition}

We can now officially introduce the following principle:

\begin{definition}\label{def:abstract-bhp}
 The abstract Bachmann-Howard principle is the assertion that every dilator has a well-founded Bachmann-Howard fixed point.
\end{definition}

To give an example we recall the functor
\begin{equation*}
 X\mapsto T_X=X\cup\{\top\}
\end{equation*}
that was considered in the introduction. We obtain a dilator if we set $\supp^T_X(\top)=\emptyset$ and $\supp^T_X(x)=\{x\}$ for $x\in X\subseteq T_X$. It is straightforward to check that the function
\begin{equation*}
 \vartheta:T_\omega\rightarrow\omega\quad\text{with}\quad\begin{cases}
                                                          \vartheta(\top)=0,\\
                                                          \vartheta(n)=n+1,
                                                         \end{cases}
\end{equation*}
is a Bachmann-Howard collapse. Conversely, if $\vartheta:T_X\rightarrow X$ is any Bachmann-Howard collapse, then we can define an embedding $f:\omega\rightarrow X$ by setting $f(0)=\vartheta(\top)$ and $f(n+1)=\vartheta(f(n))$.

Let us now discuss the formalization of the previous notions: The meta theory of the present paper is primitive recursive set theory with infinity ($\prs$), as introduced by Rathjen~\cite{rathjen-set-functions} (see also the detailed exposition in \cite[Chapter~1]{freund-thesis}). This theory has a function symbol for each primitive recursive set function in the sense of Jensen and Karp~\cite{jensen-karp}. When we speak about class-sized objects of a certain kind (e.g.~about arbitrary endofunctors on linear orders) we need to observe two restrictions: Firstly, we will only consider class-sized objects which are primitive recursive. Secondly, we cannot quantify over all primitive recursive set functions. However, we can quantify over a primitive recursive family of class-sized functions, by quantifying over its set-sized parameters. Statements about class-sized objects should thus be read as schemata.

In our context the restrictions from the previous paragraph are harmless: Girard~\cite{girard-pi2} has shown that \mbox{(prae-)dilators} are essentially determined by their restrictions to the category of natural numbers. In~\cite[Section~2]{freund-computable} we deduce that any prae-dilator is naturally equivalent to one that is given by a primitive recursive set function. Indeed we show that there is a single primitive recursive family that comprises (isomorphic copies of) all prae-dilators. Thus the abstract Bachmann-Howard principle can be expressed by a single sentence in the language of $\prs$. One can even represent prae-dilators in second-order arithmetic (see again \cite[Section~2]{freund-computable}), but this will not be relevant for the present paper.

Let us also point out what these methodological remarks mean for the construction of Bachmann-Howard fixed points: In the introduction we have said that the transformation of a (prae-)dilator $T$ into its minimal Bachmann-Howard fixed point $\bh(T)$ can be achieved by a primitive recursive set function. We can now be more precise about this claim: If $(T^u)_{u\in\mathbb V}$ is a primitive recursive family of prae-dilators, indexed by elements of the set-theoretic universe, then the function $u\mapsto\bh(T^u)$ will be primitive recursive as well.

Finally, we recall Simpson's~\cite{simpson82,simpson09} set-theoretic version $\atrs$ of arithmetical transfinite recursion: It results from $\prs$ by adding axiom beta (which asserts that every well-founded relation can be collapsed to the $\in$-relation) and the axiom of countability (which asserts that every set is countable). The additional axioms of $\atrs$ are needed for Theorem~\ref{thm:pred-bhp}, since they are used in the proof of Theorem~\ref{thm:abstract-bhp} in~\cite{freund-equivalence}.

\section{Bachmann-Howard systems}

In the introduction we have mentioned linear orders $(\vartheta_T(X),<_{\vartheta_T(X)})$ that allow for an ``almost'' order preserving collapse $\vartheta_X:T_X\rightarrow\vartheta_T(X)$. The construction of these orders proceeds in two steps. First, we must define the underlying sets:

\begin{definition}
 Consider a prae-dilator $T$. For each linear order $X$ we define $\vartheta_T(X)$ as the set of terms $\vartheta\sigma$ with $\sigma\in T_X$.
\end{definition}

In view of Definition~\ref{def:bachmann-howard-collapse} the relation $\vartheta\sigma<_{\vartheta_T(X)}\vartheta\tau$ should depend on a comparison between $\supp^T_X(\sigma)$ and $\vartheta\tau$. This is not completely straightforward, because $\supp^T_X(\sigma)$ is a subset of $X$ rather than $\vartheta_T(X)$. To resolve this problem we introduce the following notion:

\begin{definition}\label{def:collapse-bh-system}
 Consider a linear order $X$ together with functions $\iota_X:X\rightarrow\vartheta_T(X)$ and $L_X:X\rightarrow\omega$. Define $L_{\vartheta_T(X)}:\vartheta_T(X)\rightarrow\omega$ by
 \begin{equation*}
  L_{\vartheta_T(X)}(\vartheta\sigma):=\max\{L_X(x)\,|\,x\in\supp^T_X(\sigma)\}+1.
 \end{equation*}
 If we have
 \begin{equation*}
  L_{\vartheta_T(X)}\circ\iota_X=L_X,
 \end{equation*}
 then the tuple $(X,\iota_X,L_X)$ is called a Bachmann-Howard system (for $T$).
\end{definition}

Note that one obtains $L_{\vartheta_T(X)}(\iota_X(x))=L_X(x)<L_{\vartheta_T(X)}(\vartheta\sigma)$ for $x\in\supp^T_X(\sigma)$. This allows for the following recursion:

\begin{definition}\label{def:collapse-bh-system-inequality}
 Let $(X,\iota_X,L_X)$ be a Bachmann-Howard system for the prae-dilator $T$. Relying on recursion over $L_{\vartheta_T(X)}(\vartheta\sigma)+L_{\vartheta_T(X)}(\vartheta\tau)$, we stipulate that $\vartheta\sigma<_{\vartheta_T(X)}\vartheta\tau$ holds precisely if one of the following clauses is satisfied:
 \begin{enumerate}[label=(\roman*)]
  \item We have $\sigma<_{T_X}\tau$ and $[\iota_X]^{<\omega}(\supp^T_X(\sigma))\lef_{\vartheta_T(X)}\vartheta\tau$.
  \item We have $\tau<_{T_X}\sigma$ and $\vartheta\sigma\leq^{\operatorname{fin}}_{\vartheta_T(X)}[\iota_X]^{<\omega}(\supp^T_X(\tau))$.
 \end{enumerate}
\end{definition}

Let us establish the following basic property:

\begin{lemma}\label{lem:collapse-linear}
 If $(X,\iota_X,L_X)$ is a Bachmann-Howard system, then $(\vartheta_T(X),<_{\vartheta_T(X)})$ is a linear order.
\end{lemma}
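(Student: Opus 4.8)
The plan is to establish the three properties defining a strict linear order on $\vartheta_T(X)$ --- irreflexivity, trichotomy (totality together with asymmetry), and transitivity --- each by induction along the well-founded measure that already underlies Definition~\ref{def:collapse-bh-system-inequality}. Throughout I abbreviate $S_\sigma:=[\iota_X]^{<\omega}(\supp^T_X(\sigma))\subseteq\vartheta_T(X)$, and I rely on the numerical observation recorded just before Definition~\ref{def:collapse-bh-system-inequality}: if $u\in S_\sigma$, say $u=\iota_X(x)$ with $x\in\supp^T_X(\sigma)$, then $L_{\vartheta_T(X)}(u)=L_X(x)<L_{\vartheta_T(X)}(\vartheta\sigma)$. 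Consequently every comparison that appears on the right-hand side of clauses~(i) and~(ii) pairs two terms whose $L_{\vartheta_T(X)}$-values sum to strictly less than $L_{\vartheta_T(X)}(\vartheta\sigma)+L_{\vartheta_T(X)}(\vartheta\tau)$, which is exactly what will license the inductive appeals below. Irreflexivity needs no induction: both clauses demand $\sigma\neq\tau$ in $T_X$, so $\vartheta\sigma\not<_{\vartheta_T(X)}\vartheta\sigma$.

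For trichotomy I would induct on $L_{\vartheta_T(X)}(\vartheta\sigma)+L_{\vartheta_T(X)}(\vartheta\tau)$ and show that for $\sigma\neq\tau$ exactly one of $\vartheta\sigma<_{\vartheta_T(X)}\vartheta\tau$ and $\vartheta\tau<_{\vartheta_T(X)}\vartheta\sigma$ holds. Since $T_X$ is linear we may assume $\sigma<_{T_X}\tau$. Then clause~(i) makes $\vartheta\sigma<_{\vartheta_T(X)}\vartheta\tau$ equivalent to ``$u<_{\vartheta_T(X)}\vartheta\tau$ for every $u\in S_\sigma$'', while clause~(ii) makes $\vartheta\tau<_{\vartheta_T(X)}\vartheta\sigma$ equivalent to $\vartheta\tau\leq^{\operatorname{fin}}_{\vartheta_T(X)}S_\sigma$, i.e.\ to ``$\vartheta\tau\leq_{\vartheta_T(X)}u$ for some $u\in S_\sigma$''. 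By the induction hypothesis each pair $(u,\vartheta\tau)$ with $u\in S_\sigma$ is already trichotomous, so ``$\vartheta\tau\leq_{\vartheta_T(X)}u$'' is precisely the negation of ``$u<_{\vartheta_T(X)}\vartheta\tau$''. The two displayed conditions are therefore exact negations of one another, and hence exactly one of them is satisfied.

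The real work, and the step I expect to be the main obstacle, is transitivity: assuming $\vartheta\rho<_{\vartheta_T(X)}\vartheta\sigma<_{\vartheta_T(X)}\vartheta\tau$ I must derive $\vartheta\rho<_{\vartheta_T(X)}\vartheta\tau$, proceeding by induction on $L_{\vartheta_T(X)}(\vartheta\rho)+L_{\vartheta_T(X)}(\vartheta\sigma)+L_{\vartheta_T(X)}(\vartheta\tau)$. Because $T_X$ is linear, the clause justifying each hypothesis is fixed by the $T_X$-order of the pair involved, so there are four cases according to whether $\rho,\sigma$ and $\sigma,\tau$ are ordered upward (clause~(i)) or downward (clause~(ii)). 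The uniform device is that in each case one applies the induction hypothesis to a triple obtained from $(\vartheta\rho,\vartheta\sigma,\vartheta\tau)$ by replacing one entry with a support element $u\in S_\bullet$; by the numerical fact above this strictly decreases the measure, so the appeal is legitimate. When both steps go upward, $\rho<_{T_X}\sigma<_{T_X}\tau$ and for each $u\in S_\rho$ one has $u<_{\vartheta_T(X)}\vartheta\sigma<_{\vartheta_T(X)}\vartheta\tau$; the induction hypothesis on $(u,\vartheta\sigma,\vartheta\tau)$ yields $u<_{\vartheta_T(X)}\vartheta\tau$, whence $S_\rho\lef_{\vartheta_T(X)}\vartheta\tau$ and clause~(i) gives the claim. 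In the mixed cases the $T_X$-order of $\rho$ and $\tau$ is not forced, but the witnessing inequality ($\vartheta\sigma\leq^{\operatorname{fin}}_{\vartheta_T(X)}S_\tau$, respectively $\vartheta\rho\leq^{\operatorname{fin}}_{\vartheta_T(X)}S_\sigma$) supplies a support element through which $\vartheta\rho$ and $\vartheta\tau$ are linked by a two-step chain with one endpoint in a support set --- namely $\vartheta\rho<_{\vartheta_T(X)}\vartheta\sigma\leq_{\vartheta_T(X)}v$ with $v\in S_\tau$, respectively $\vartheta\rho\leq_{\vartheta_T(X)}w<_{\vartheta_T(X)}\vartheta\tau$ with $w\in S_\sigma$ --- to which a single lower-measure instance of the induction hypothesis applies, yielding either $\vartheta\rho<_{\vartheta_T(X)}\vartheta\tau$ outright or the instance of clause~(i)/(ii) needed to read it off, after a short distinction on the $T_X$-order of $\rho$ and $\tau$ where necessary. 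The delicate part is exactly this case analysis together with the bookkeeping that keeps each replaced triple below the current measure; that the bookkeeping succeeds is precisely the content of the Bachmann-Howard system condition $L_{\vartheta_T(X)}\circ\iota_X=L_X$, which guarantees that a support element always carries a strictly smaller $L_{\vartheta_T(X)}$-value than any term it supports. Assembling irreflexivity, trichotomy and transitivity shows that $(\vartheta_T(X),<_{\vartheta_T(X)})$ is a linear order.
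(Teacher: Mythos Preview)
Your proposal is correct and follows essentially the same approach as the paper: induction on the sum of $L_{\vartheta_T(X)}$-values for trichotomy and for transitivity, with the same case split according to which clause of Definition~\ref{def:collapse-bh-system-inequality} is active. The only cosmetic differences are that you fold asymmetry into the trichotomy induction (the paper states antisymmetry separately) and that in the clause~(i)/clause~(ii) mixed case you can dispose of the possibility $\rho=\tau$ by invoking the asymmetry you have already established, whereas the paper derives a direct contradiction at that point.
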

\begin{proof}
 The antisymmetry of $<_{\vartheta_T(X)}$ follows easily from the antisymmetry of $<_{T_X}$. Trichotomy for $\vartheta\sigma$ and $\vartheta\tau$ is established by induction on $L_{\vartheta_T(X)}(\vartheta\sigma)+L_{\vartheta_T(X)}(\vartheta\tau)$: By symmetry we may assume $\sigma<_{T_X}\tau$. For an arbitrary $x\in\supp^T_X(\sigma)$ the induction hypothesis provides $\iota_X(x)<_{\vartheta_T(X)}\vartheta\tau$ or $\vartheta\tau\leq_{\vartheta_T(X)}\iota_X(x)$. If the former holds for all $x\in\supp^T_X(\sigma)$, we have $\vartheta\sigma<\vartheta\tau$ by clause~(i) of the previous definition. If we have $\vartheta\tau\leq_{\vartheta_T(X)}\iota_X(x)$ for some $x\in\supp^T_X(\sigma)$, we get $\vartheta\tau<_{\vartheta_T(X)}\vartheta\sigma$ by clause~(ii). Finally, we argue by induction on $L_{\vartheta_T(X)}(\vartheta\rho)+L_{\vartheta_T(X)}(\vartheta\sigma)+L_{\vartheta_T(X)}(\vartheta\tau)$ to show that $\vartheta\rho<_{\vartheta_T(X)}\vartheta\sigma<_{\vartheta_T(X)}\vartheta\tau$ implies $\vartheta\rho<_{\vartheta_T(X)}\vartheta\tau$. If $\vartheta\rho<_{\vartheta_T(X)}\vartheta\sigma$ and $\vartheta\sigma<_{\vartheta_T(X)}\vartheta\tau$ hold by the same clause of the previous definition, then it is easy to conclude by induction hypothesis. Now assume that $\vartheta\rho<_{\vartheta_T(X)}\vartheta\sigma$ holds by clause~(i) while $\vartheta\sigma<_{\vartheta_T(X)}\vartheta\tau$ holds by clause~(ii). This means that we have
 \begin{align*}
  \rho&<_{T_X}\sigma &&\text{and} & [\iota_X]^{<\omega}(\supp^T_X(\rho))&\lef_{\vartheta_T(X)}\vartheta\sigma,\\
  \tau&<_{T_X}\sigma &&\text{and} & \vartheta\sigma&\leq^{\operatorname{fin}}_{\vartheta_T(X)}[\iota_X]^{<\omega}(\supp^T_X(\tau)).
 \end{align*}
 If we have $\rho<_{T_X}\tau$ or $\tau<_{T_X}\rho$, then we can conclude by induction hypothesis. It remains to exclude the case $\rho=\tau$: By the assumption $\vartheta\sigma\leq^{\operatorname{fin}}_{\vartheta_T(X)}[\iota_X]^{<\omega}(\supp^T_X(\tau))$, pick an element $x\in\supp^T_X(\tau)=\supp^T_X(\rho)$ with $\vartheta\sigma\leq_{\vartheta_T(X)}\iota_X(x)$. In view of $[\iota_X]^{<\omega}(\supp^T_X(\rho))\lef_{\vartheta_T(X)}\vartheta\sigma$ we also have $\iota_X(x)<_{\vartheta_T(X)}\vartheta\sigma$. The induction hypothesis allows us to conclude $\iota_X(x)<_{\vartheta_T(X)}\iota_X(x)$ by transitivity. This contradicts antisymmetry, as desired. Finally, assume that
 $\vartheta\rho<_{\vartheta_T(X)}\vartheta\sigma$ holds by clause~(ii) while $\vartheta\sigma<_{\vartheta_T(X)}\vartheta\tau$ holds by clause~(i). This means that we have
 \begin{align*}
 \sigma&<_{T_X}\rho &&\text{and} & \vartheta\rho&\leq^{\operatorname{fin}}_{\vartheta_T(X)}[\iota_X]^{<\omega}(\supp^T_X(\sigma)),\\
 \sigma&<_{T_X}\tau &&\text{and} & [\iota_X]^{<\omega}(\supp^T_X(\sigma))&\lef_{\vartheta_T(X)}\vartheta\tau.
 \end{align*}
 The assumption $\vartheta\rho\leq^{\operatorname{fin}}_{\vartheta_T(X)}[\iota_X]^{<\omega}(\supp^T_X(\sigma))$ yields an $x\in\supp^T_X(\sigma)$ with $\vartheta\rho\leq_{\vartheta_T(X)}\iota_X(x)$. By $[\iota_X]^{<\omega}(\supp^T_X(\sigma))\lef_{\vartheta_T(X)}\vartheta\tau$ we also have $\iota_X(x)<_{\vartheta_T(X)}\vartheta\tau$. Using the induction hypothesis we can conclude $\vartheta\rho<_{\vartheta_T(X)}\vartheta\tau$, as required.
\end{proof}

We can now define the collapsing functions mentioned in the introduction:

\begin{definition}\label{def:collapse-approximations}
 Let $(X,\iota_X,L_X)$ be a Bachmann-Howard system for the prae-dilator~$T$. We define a function $\vartheta_X:T_X\rightarrow\vartheta_T(X)$ by setting $\vartheta_X(\sigma)=\vartheta\sigma$.
\end{definition}

Let us recover the conditions from Definition~\ref{def:bachmann-howard-collapse}:

\begin{proposition}\label{prop:collapses-admissible}
 Assume that $(X,\iota_X,L_X)$ is a Bachmann-Howard system for the prae-dilator~$T$. Then the following holds for all $\sigma,\tau\in T_X$:
 \begin{enumerate}[label=(\roman*)]
  \item If we have $\sigma<_{T_X}\tau$ and $[\iota_X]^{<\omega}(\supp^T_X(\sigma))\lef_{\vartheta_T(X)}\vartheta_X(\tau)$, then $\vartheta_X(\sigma)<_{\vartheta_T(X)}\vartheta_X(\tau)$ holds.
  \item We have $[\iota_X]^{<\omega}(\supp^T_X(\sigma))\lef_{\vartheta_T(X)}\vartheta_X(\sigma)$.
 \end{enumerate}
\end{proposition}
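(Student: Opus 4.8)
The plan is to prove~(i) directly from Definition~\ref{def:collapse-bh-system-inequality} and to concentrate all the work on~(ii). For~(i), recall from Definition~\ref{def:collapse-approximations} that $\vartheta_X(\sigma)=\vartheta\sigma$ and $\vartheta_X(\tau)=\vartheta\tau$; the hypotheses $\sigma<_{T_X}\tau$ and $[\iota_X]^{<\omega}(\supp^T_X(\sigma))\lef_{\vartheta_T(X)}\vartheta\tau$ are then literally the premises of clause~(i) of Definition~\ref{def:collapse-bh-system-inequality}, so that clause yields $\vartheta\sigma<_{\vartheta_T(X)}\vartheta\tau$, as required.

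For~(ii) I would unfold $\lef_{\vartheta_T(X)}$ and reduce the claim to showing $\iota_X(x)<_{\vartheta_T(X)}\vartheta\sigma$ for every $x\in\supp^T_X(\sigma)$. First I would write $\iota_X(x)=\vartheta\rho$ for the unique $\rho\in T_X$ with this value. The decisive bookkeeping is the level identity $L_{\vartheta_T(X)}\circ\iota_X=L_X$ from Definition~\ref{def:collapse-bh-system}: since $x\in\supp^T_X(\sigma)$ it gives $L_{\vartheta_T(X)}(\vartheta\rho)=L_X(x)<L_{\vartheta_T(X)}(\vartheta\sigma)$, so in particular $\vartheta\rho\neq\vartheta\sigma$. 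As $(\vartheta_T(X),<_{\vartheta_T(X)})$ is linear by Lemma~\ref{lem:collapse-linear}, it then suffices to rule out $\vartheta\sigma<_{\vartheta_T(X)}\vartheta\rho$.

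I would do this by strong induction on $L_X(x)$. Supposing $\vartheta\sigma<_{\vartheta_T(X)}\vartheta\rho$ toward a contradiction, clause~(i) of Definition~\ref{def:collapse-bh-system-inequality} is impossible, because $x\in\supp^T_X(\sigma)$ would force $\iota_X(x)<_{\vartheta_T(X)}\vartheta\rho=\iota_X(x)$; hence clause~(ii) holds, giving $\rho<_{T_X}\sigma$ and some $y\in\supp^T_X(\rho)$ with $\vartheta\sigma\leq_{\vartheta_T(X)}\iota_X(y)$, where equality is again excluded by the level count, so $\vartheta\sigma<_{\vartheta_T(X)}\iota_X(y)$ and $L_X(y)<L_{\vartheta_T(X)}(\vartheta\rho)=L_X(x)$. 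The induction hypothesis, applied to the support element $y$ of $\rho$, yields $\iota_X(y)<_{\vartheta_T(X)}\vartheta\rho=\iota_X(x)$. I would then pass to an element $y$ of least level in the (now nonempty) set of all $z\in X$ satisfying $\vartheta\sigma<_{\vartheta_T(X)}\iota_X(z)<_{\vartheta_T(X)}\iota_X(x)$ and $L_X(z)<L_X(x)$. Writing $\iota_X(y)=\vartheta\rho'$ and analysing $\vartheta\sigma<_{\vartheta_T(X)}\vartheta\rho'$ through the two clauses, clause~(i) would give $\iota_X(x)<_{\vartheta_T(X)}\iota_X(y)$, contradicting $\iota_X(y)<_{\vartheta_T(X)}\iota_X(x)$, while clause~(ii) would produce $y'\in\supp^T_X(\rho')$ with $\vartheta\sigma<_{\vartheta_T(X)}\iota_X(y')$ and $L_X(y')<L_X(y)$; the induction hypothesis would give $\iota_X(y')<_{\vartheta_T(X)}\vartheta\rho'=\iota_X(y)<_{\vartheta_T(X)}\iota_X(x)$, placing $y'$ in the same set with smaller level and contradicting minimality. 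Both clauses being impossible, the assumption $\vartheta\sigma<_{\vartheta_T(X)}\vartheta\rho$ fails, which proves~(ii).

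The hard part is precisely this descent. One cannot finish by a single induction on $L_{\vartheta_T(X)}(\vartheta\sigma)$, since the witness extracted from clause~(ii) lies in $\supp^T_X(\rho)$ rather than $\supp^T_X(\sigma)$; instead one must iterate down the strictly decreasing levels $L_X(y)>L_X(y')>\cdots$ until clause~(i) is forced and collides with irreflexivity. Choosing a least-level element of the auxiliary set, as above, packages this descent cleanly, with transitivity and trichotomy from Lemma~\ref{lem:collapse-linear} and the level identity as the only other ingredients.
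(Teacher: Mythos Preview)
Your argument is correct, and for~(i) it is exactly the paper's one-line observation. For~(ii), however, you take a genuinely different route. The paper introduces an auxiliary ``subterm'' function
\[
  E_X(\vartheta\sigma):=\{\vartheta\sigma\}\cup\bigcup\{E_X(\iota_X(x))\,|\,x\in\supp^T_X(\sigma)\},
\]
and then proves the single implication $\vartheta\rho\in E_X(\vartheta\sigma)\Rightarrow\vartheta\rho\leq_{\vartheta_T(X)}\vartheta\sigma$ by induction on $L_{\vartheta_T(X)}(\vartheta\rho)+L_{\vartheta_T(X)}(\vartheta\sigma)$, with a clean three-way case split on the $T_X$-comparison of $\rho$ and $\sigma$. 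Claim~(ii) follows immediately, since $\iota_X(x)\in E_X(\vartheta\sigma)$ whenever $x\in\supp^T_X(\sigma)$, and the level inequality rules out equality.

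Your approach avoids the auxiliary function entirely, trading it for a two-layer descent: an outer strong induction on $L_X(x)$, and inside it a least-level selection from the auxiliary set $\{z:\vartheta\sigma<_{\vartheta_T(X)}\iota_X(z)<_{\vartheta_T(X)}\iota_X(x),\ L_X(z)<L_X(x)\}$. This works, but note the redundancy: the clause analysis you perform to show the set nonempty is essentially the same as the clause analysis you perform on its minimal element, with the outer induction hypothesis invoked both times to push the witnesses below~$\iota_X(x)$. The paper's $E_X$ packages this descent once and for all; it also yields a reusable statement (every ``subterm'' of $\vartheta\sigma$ lies weakly below it), whereas your argument is tailored to the specific claim. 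On the other hand, your proof has the virtue of introducing no new definitions and making the reliance on Lemma~\ref{lem:collapse-linear} and the level identity $L_{\vartheta_T(X)}\circ\iota_X=L_X$ completely explicit.
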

\begin{proof}
 Claim~(i) is immediate by the definitions. To establish claim~(ii) we consider the auxiliary function $E_X:\vartheta_T(X)\rightarrow[\vartheta_T(X)]^{<\omega}$ with
 \begin{equation*}
  E_X(\vartheta\sigma):=\{\vartheta\sigma\}\cup\bigcup\{E_X(\iota_X(x))\,|\,x\in\supp^T_X(\sigma)\},
 \end{equation*}
 which is defined by recursion on $L_{\vartheta_T(X)}(\vartheta\sigma)>L_{\vartheta_T(X)}(\iota_X(x))$. One may think of $\vartheta\rho\in E_X(\vartheta\sigma)$ as a subterm of $\vartheta\sigma$: A straightforward induction on $L_{\vartheta_T(X)}(\vartheta\sigma)$ shows that $\vartheta\rho\in E_X(\vartheta\sigma)$ implies $E_X(\vartheta\rho)\subseteq E_X(\vartheta\sigma)$ and $L_{\vartheta_T(X)}(\vartheta\rho)\leq L_{\vartheta_T(X)}(\vartheta\sigma)$. The crucial step towards claim~(ii) is the implication
 \begin{equation*}
  \vartheta\rho\in E_X(\vartheta\sigma)\quad\Rightarrow\quad\vartheta\rho\leq_{\vartheta_T(X)}\vartheta\sigma,
 \end{equation*}
 which we prove by induction on $L_{\vartheta_T(X)}(\vartheta\rho)+L_{\vartheta_T(X)}(\vartheta\sigma)$. Let us distinguish three cases: If we have $\rho=\sigma$, then the claim is immediate. Now assume $\rho<_{T_X}\sigma$. To infer $\vartheta\rho<_{\vartheta_T(X)}\vartheta\sigma$ we must establish $[\iota_X]^{<\omega}(\supp^T_X(\rho))\lef_{\vartheta_T(X)}\vartheta\sigma$. For an arbitrary $x\in\supp^T_X(\rho)$ we have
 \begin{equation*}
  L_{\vartheta_T(X)}(\iota_X(x))<L_{\vartheta_T(X)}(\vartheta\rho)\leq L_{\vartheta_T(X)}(\vartheta\sigma),
 \end{equation*}
as well as
\begin{equation*}
 \iota_X(x)\in E_X(\iota_X(x))\subseteq E_X(\vartheta\rho)\subseteq E_X(\vartheta\sigma).
\end{equation*}
So the induction hypothesis yields $\iota_X(x)\leq_{\vartheta_T(X)}\vartheta\sigma$. Also note that $\iota_X(x)$ and $\vartheta\sigma$ cannot be the same term, since we have $L_{\vartheta_T(X)}(\iota_X(x))<L_{\vartheta_T(X)}(\vartheta\sigma)$. Thus we get $\iota_X(x)<_{\vartheta_T(X)}\vartheta\sigma$, as required. Finally we consider the case $\sigma<_{T_X}\rho$. By the definition of $E_X(\vartheta\sigma)$ we may pick a $y\in\supp^T_X(\sigma)$ with $\vartheta\rho\in E_X(\iota_X(y))$. The induction hypothesis provides $\vartheta\rho\leq_{\vartheta_T(X)}\iota_X(y)$ and thus
\begin{equation*}
 \vartheta\rho\leq^{\operatorname{fin}}_{\vartheta_T(X)}[\iota_X]^{<\omega}(\supp^T_X(\sigma)).
\end{equation*}
 Then we can conclude $\vartheta\rho<_{\vartheta_T(X)}\vartheta\sigma$ by definition. To deduce claim~(ii) of the proposition we observe that $x\in\supp^T_X(\sigma)$ yields
 \begin{equation*}
  \iota_X(x)\in E_X(\iota_X(x))\subseteq E_X(\vartheta\sigma).
 \end{equation*}
 We have just shown that this implies $\iota_X(x)\leq_{\vartheta_T(X)}\vartheta\sigma$. In view of $L_{\vartheta_T(X)}(\iota_X(x))<L_{\vartheta_T(X)}(\vartheta\sigma)$ the terms $\iota_X(x)$ and $\vartheta\sigma$ must be different. So indeed we get $\iota_X(x)<_{\vartheta_T(X)}\vartheta\sigma=\vartheta_X(\sigma)$. 
\end{proof}

In particular we have shown that the condition $\tau<_{T_X}\sigma$ in clause~(ii) of Definition~\ref{def:collapse-bh-system-inequality} becomes redundant: The implication
\begin{equation*}
 \vartheta\sigma\leq^{\operatorname{fin}}_{\vartheta_T(X)}[\iota_X]^{<\omega}(\supp^T_X(\tau))\quad\Rightarrow\quad\vartheta\sigma<_{\vartheta_T(X)}\vartheta\tau
\end{equation*}
follows from $[\iota_X]^{<\omega}(\supp^T_X(\tau))\lef_{\vartheta_T(X)}\vartheta_X(\tau)$ and transitivity. To define the linear order $<_{\vartheta_T(X)}$ we have relied on a function $\iota_X:X\rightarrow\vartheta_T(X)$ which respects the length assignments $L_X$ and $L_{\vartheta_T(X)}$. Now that we have an order on $\vartheta_T(X)$ we want $\iota_X$ to respect it as well:

\begin{definition}
 A Bachmann-Howard system $(X,\iota_X,L_X)$ is called good if the function $\iota_X:X\rightarrow\vartheta_T(X)$ is an order embedding.
\end{definition}

Note that this justifies the arrow $T_{\iota_{X_0}}$ in the second diagram from the introduction: If $\iota_{X_0}$ is an embedding of $X_0$ into $X_1=\vartheta_T(X_0)$, then $T_{\iota_{X_0}}$ is an embedding of $T_{X_0}$ into $T_{X_1}$. Based on this arrow we can also construct the arrow $\iota_{X_1}:X_1\rightarrow X_2$:

\begin{definition}
 Let $(X,\iota_X,L_X)$ be a good Bachmann-Howard system. We define a function $\iota_{\vartheta_T(X)}:\vartheta_T(X)\rightarrow\vartheta_T(\vartheta_T(X))$ by setting $\iota_{\vartheta_T(X)}(\vartheta\sigma):=\vartheta\, T_{\iota_X}(\sigma)$.
\end{definition}

Note that the empty order, together with the unique functions $\iota_\emptyset:\emptyset\rightarrow \vartheta_T(\emptyset)$ and $L_\emptyset:\emptyset\rightarrow\omega$, is a good Bachmann-Howard system for any prae-dilator. Once we have a starting point we can use the following result to construct iterations:

\begin{theorem}\label{thm:bh-systems-iterate}
 Consider a prae-dilator $T$. If $(X,\iota_X,L_X)$ is a good Bachmann-Howard system for $T$, then so is $(\vartheta_T(X),\iota_{\vartheta_T(X)},L_{\vartheta_T(X)})$.
\end{theorem}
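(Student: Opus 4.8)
The plan is to verify the two defining requirements in turn: first that the tuple $(\vartheta_T(X),\iota_{\vartheta_T(X)},L_{\vartheta_T(X)})$ satisfies the equation from Definition~\ref{def:collapse-bh-system}, and then that $\iota_{\vartheta_T(X)}$ is an order embedding. Throughout I abbreviate $Y:=\vartheta_T(X)$, so that $\iota_Y:=\iota_{\vartheta_T(X)}$ sends $\vartheta\sigma\in Y$ (with $\sigma\in T_X$) to $\vartheta\,T_{\iota_X}(\sigma)\in\vartheta_T(Y)$; note that $T_{\iota_X}$ is available precisely because the goodness of $(X,\iota_X,L_X)$ makes $\iota_X$ a morphism in the category of linear orders.

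For the length equation $L_{\vartheta_T(Y)}\circ\iota_Y=L_{\vartheta_T(X)}$ I would compute $L_{\vartheta_T(Y)}(\iota_Y(\vartheta\sigma))$ directly. The key ingredient is the naturality of $\supp^T$, which gives $\supp^T_Y(T_{\iota_X}(\sigma))=[\iota_X]^{<\omega}(\supp^T_X(\sigma))$. Unfolding Definition~\ref{def:collapse-bh-system} and using the Bachmann-Howard equation $L_{\vartheta_T(X)}\circ\iota_X=L_X$ for the given system then yields
\begin{align*}
 L_{\vartheta_T(Y)}(\vartheta\,T_{\iota_X}(\sigma))&=\max\{L_{\vartheta_T(X)}(\iota_X(x))\mid x\in\supp^T_X(\sigma)\}+1\\
 &=\max\{L_X(x)\mid x\in\supp^T_X(\sigma)\}+1=L_{\vartheta_T(X)}(\vartheta\sigma),
\end{align*}
which is exactly the required equation. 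In particular $(Y,\iota_Y,L_{\vartheta_T(X)})$ is a Bachmann-Howard system, so Lemma~\ref{lem:collapse-linear} makes $\vartheta_T(Y)$ a linear order; I shall use this below.

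To prove goodness I would exploit that a strictly order-preserving map between linear orders is automatically an order embedding, as it reflects the order by linearity. Since both $\vartheta_T(X)$ and $\vartheta_T(Y)$ are linear, it therefore suffices to establish the single implication
\begin{equation*}
 \vartheta\sigma<_{\vartheta_T(X)}\vartheta\tau\quad\Longrightarrow\quad\iota_Y(\vartheta\sigma)<_{\vartheta_T(Y)}\iota_Y(\vartheta\tau).
\end{equation*}
I would prove this by induction on $L_{\vartheta_T(X)}(\vartheta\sigma)+L_{\vartheta_T(X)}(\vartheta\tau)$, which is legitimate because the length equation just proved shows that this quantity coincides with $L_{\vartheta_T(Y)}(\iota_Y(\vartheta\sigma))+L_{\vartheta_T(Y)}(\iota_Y(\vartheta\tau))$. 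Writing $\sigma':=T_{\iota_X}(\sigma)$ and $\tau':=T_{\iota_X}(\tau)$, two standing facts drive the argument: $T_{\iota_X}$ is order preserving, so $\sigma<_{T_X}\tau$ entails $\sigma'<_{T_Y}\tau'$; and naturality gives $\supp^T_Y(\sigma')=[\iota_X]^{<\omega}(\supp^T_X(\sigma))$, so that every member of $[\iota_Y]^{<\omega}(\supp^T_Y(\sigma'))$ has the form $\iota_Y(\iota_X(x))$ with $x\in\supp^T_X(\sigma)$.

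The induction splits according to the clause of Definition~\ref{def:collapse-bh-system-inequality} that witnesses $\vartheta\sigma<_{\vartheta_T(X)}\vartheta\tau$. In clause~(i) the hypothesis $[\iota_X]^{<\omega}(\supp^T_X(\sigma))\lef_{\vartheta_T(X)}\vartheta\tau$ supplies $\iota_X(x)<_{\vartheta_T(X)}\vartheta\tau$ for each $x\in\supp^T_X(\sigma)$; since $\iota_X(x)$ has strictly smaller length than $\vartheta\sigma$ (by the note following Definition~\ref{def:collapse-bh-system}), the induction hypothesis converts this into $\iota_Y(\iota_X(x))<_{\vartheta_T(Y)}\iota_Y(\vartheta\tau)=\vartheta\tau'$, which gives $[\iota_Y]^{<\omega}(\supp^T_Y(\sigma'))\lef_{\vartheta_T(Y)}\vartheta\tau'$ and hence clause~(i) on the image side. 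In clause~(ii) one picks $x\in\supp^T_X(\tau)$ with $\vartheta\sigma\leq_{\vartheta_T(X)}\iota_X(x)$; once more the length of $\iota_X(x)$ lies below that of $\vartheta\tau$, so the induction hypothesis (together with the trivial equality case) yields $\iota_Y(\vartheta\sigma)\leq_{\vartheta_T(Y)}\iota_Y(\iota_X(x))$ and thus $\vartheta\sigma'\leq^{\operatorname{fin}}_{\vartheta_T(Y)}[\iota_Y]^{<\omega}(\supp^T_Y(\tau'))$; the redundancy of the order condition in clause~(ii), noted after Proposition~\ref{prop:collapses-admissible}, then delivers $\iota_Y(\vartheta\sigma)=\vartheta\sigma'<_{\vartheta_T(Y)}\vartheta\tau'$. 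I expect the main obstacle to be purely a matter of bookkeeping: confirming at each appeal to the induction hypothesis that the relevant length sum has genuinely decreased, and keeping track of the identification $\iota_Y(\vartheta\xi)=\vartheta\,T_{\iota_X}(\xi)$ as the terms $\iota_X(x)$ are themselves written in the form $\vartheta\xi_x$.
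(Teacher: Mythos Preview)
Your proposal is correct and follows essentially the same route as the paper: verify $L_{\vartheta_T(Y)}\circ\iota_Y=L_{\vartheta_T(X)}$ via naturality of $\supp^T$ and the length equation for $X$, then prove that $\iota_{\vartheta_T(X)}$ preserves the strict order by induction on the length sum, splitting along the two clauses of Definition~\ref{def:collapse-bh-system-inequality}. The only cosmetic differences are that the paper writes $\vartheta^2_T(X)$ instead of $\vartheta_T(Y)$, dispatches clause~(ii) with ``a similar argument applies'' rather than spelling it out, and does not explicitly invoke the redundancy observation after Proposition~\ref{prop:collapses-admissible} (it is not needed, since $\tau'<_{T_Y}\sigma'$ follows directly from $\tau<_{T_X}\sigma$).
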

\begin{proof}
 Abbreviate $\vartheta^2_T(X):=\vartheta_T(\vartheta_T(X))$ and consider $L_{\vartheta^2_T(X)}:\vartheta^2_T(X)\rightarrow\omega$ as constructed in Definition~\ref{def:collapse-bh-system}. Since $\supp^T$ is a natural transformation and $X$ is a Bachmann-Howard system we can compute
 \begin{alignat*}{3}
  L_{\vartheta^2_T(X)}\circ&\iota_{\vartheta_T(X)}(\vartheta\sigma)=L_{\vartheta^2_T(X)}(\vartheta\, T_{\iota_X}(\sigma))&&=\\
  &=\max\{L_{\vartheta_T(X)}(s)\,|\,s\in\supp^T_{\vartheta_T(X)}(T_{\iota_X}(\sigma))\}+1&&=\\
  &=\max\{L_{\vartheta_T(X)}(s)\,|\,s\in[\iota_X]^{<\omega}(\supp^T_X(\sigma))\}+1&&=\\
  &=\max\{L_{\vartheta_T(X)}(\iota_X(x))\,|\,x\in\supp^T_X(\sigma)\}+1&&=\\
  &=\max\{L_X(x)\,|\,x\in\supp^T_X(\sigma)\}+1&&=L_{\vartheta_T(X)}(\vartheta\sigma).
 \end{alignat*}
 This shows that $(\vartheta_T(X),\iota_{\vartheta_T(X)},L_{\vartheta_T(X)})$ is a Bachmann-Howard system. We can now invoke Definition~\ref{def:collapse-bh-system-inequality} an Lemma~\ref{lem:collapse-linear} to equip $\vartheta^2_T(X)$ with a linear order. To show that $\vartheta_T(X)$ is good we establish the implication
 \begin{equation*}
  s<_{\vartheta_T(X)}t\quad\Rightarrow\quad\iota_{\vartheta_T(X)}(s)<_{\vartheta^2_T(X)}\iota_{\vartheta_T(X)}(t),
 \end{equation*}
 by induction on $L_{\vartheta_T(X)}(s)+L_{\vartheta_T(X)}(t)$. First assume that $s=\vartheta\sigma<_{\vartheta_T(X)}\vartheta\tau=t$ holds by clause~(i) of Definition~\ref{def:collapse-bh-system-inequality}. This means that we have
 \begin{equation*}
  \sigma<_{T_X}\tau\qquad\text{and}\qquad[\iota_X]^{<\omega}(\supp^T_X(\sigma))\lef_{\vartheta_T(X)}t.
 \end{equation*}
 Clearly we have $T_{\iota_X}(\sigma)<_{T_{\vartheta_T(X)}}T_{\iota_X}(\tau)$. To conclude
 \begin{equation*}
  \iota_{\vartheta_T(X)}(s)=\vartheta\, T_{\iota_X}(\sigma)<_{\vartheta^2_T(X)}\vartheta\, T_{\iota_X}(\tau)=\iota_{\vartheta_T(X)}(t)
 \end{equation*}
 we need to establish
 \begin{equation*}
  [\iota_{\vartheta_T(X)}]^{<\omega}(\supp^T_{\vartheta_T(X)}(T_{\iota_X}(\sigma)))\lef_{\vartheta^2_T(X)}\iota_{\vartheta_T(X)}(t).
 \end{equation*}
 For any $r\in\supp^T_{\vartheta_T(X)}(T_{\iota_X}(\sigma))=[\iota_X]^{<\omega}(\supp^T_X(\sigma))$ we have $r<_{\vartheta_T(X)}t$ by assumption. The induction hypothesis yields $\iota_{\vartheta_T(X)}(r)<_{\vartheta^2_T(X)}\iota_{\vartheta_T(X)}(t)$, as required. A similar argument applies if $s<_{\vartheta_T(X)}t$ holds by clause~(ii) of Definition~\ref{def:collapse-bh-system-inequality}.
\end{proof}

By Definition~\ref{def:collapse-approximations} we obtain a collapse $\vartheta_{\vartheta_T(X)}:T_{\vartheta_T(X)}\rightarrow\vartheta_T(\vartheta_T(X))$. The following shows that the diagram from the introduction commutes:

\begin{proposition}\label{prop:collapses-commute}
 Assume that $X$ and thus $\vartheta_T(X)$ is a good Bachmann-Howard system for a prae-dilator $T$. Then we have $\iota_{\vartheta_T(X)}\circ\vartheta_X=\vartheta_{\vartheta_T(X)}\circ T_{\iota_X}$.
\end{proposition}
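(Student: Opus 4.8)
The plan is to verify the identity pointwise on the underlying set of $T_X$, so I would fix an arbitrary element $\sigma\in T_X$ and evaluate both compositions on $\sigma$. The key observation is that every function occurring in the statement is defined by a purely syntactic rule on terms of the form $\vartheta\rho$, so establishing the equation reduces to comparing the two terms produced by the two sides; no appeal to the order relations will be needed.

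Working out the left-hand side, I would first apply Definition~\ref{def:collapse-approximations} to obtain $\vartheta_X(\sigma)=\vartheta\sigma$, and then apply the defining clause of $\iota_{\vartheta_T(X)}$, which sends $\vartheta\sigma$ to $\vartheta\,T_{\iota_X}(\sigma)$. For the right-hand side I would use that $T_{\iota_X}(\sigma)$ is an element of $T_{\vartheta_T(X)}$, so that Definition~\ref{def:collapse-approximations}, applied to the good Bachmann-Howard system $(\vartheta_T(X),\iota_{\vartheta_T(X)},L_{\vartheta_T(X)})$ whose goodness is guaranteed by Theorem~\ref{thm:bh-systems-iterate}, yields $\vartheta_{\vartheta_T(X)}(T_{\iota_X}(\sigma))=\vartheta\,T_{\iota_X}(\sigma)$. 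Both compositions therefore send $\sigma$ to the same term $\vartheta\,T_{\iota_X}(\sigma)$ of $\vartheta_T(\vartheta_T(X))$, which gives the desired equality of functions and shows that the square in the introduction commutes.

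There is essentially no obstacle here, since the statement is a direct unwinding of the definitions. The only point that requires a moment of care is the bookkeeping about which Bachmann-Howard system each occurrence of $\vartheta$ belongs to: the term $\vartheta\sigma$ lives in $\vartheta_T(X)$, whereas $\vartheta\,T_{\iota_X}(\sigma)$ lives in the second-level construction $\vartheta_T(\vartheta_T(X))$, and one must confirm that the codomain of $\iota_{\vartheta_T(X)}$ and the domain of the collapse $\vartheta_{\vartheta_T(X)}$ both refer to this second-level order. Once the types are aligned, the equation holds on the nose, without any induction or use of transitivity.
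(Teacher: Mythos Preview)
Your proof is correct and follows exactly the same approach as the paper's own proof: a direct unwinding of the definitions on an arbitrary $\sigma\in T_X$, showing both sides equal $\vartheta\,T_{\iota_X}(\sigma)$. The paper compresses this into a single chain of equalities, but the content is identical.
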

\begin{proof}
 Unravelling definitions we compute
 \begin{equation*}
  \vartheta_{\vartheta_T(X)}(T_{\iota_X}(\sigma))=\vartheta\, T_{\iota_X}(\sigma)=\iota_{\vartheta_T(X)}(\vartheta\sigma)=\iota_{\vartheta_T(X)}(\vartheta_X(\sigma)),
 \end{equation*}
 as promised.
\end{proof}

\section{The minimal Bachmann-Howard fixed point}

In the previous section we have given a detailed construction of the diagram from the introduction. The goal of this section is to investigate its direct limit. We have already observed that the empty order $\emptyset$, together with the unique functions $\iota_\emptyset:\emptyset\rightarrow \vartheta_T(\emptyset)$ and $L_\emptyset:\emptyset\rightarrow\omega$, is a good Bachmann-Howard system for any prae-dilator. Together with Theorem~\ref{thm:bh-systems-iterate} we can construct the following objects:

\begin{definition}\label{def:bhT}
 Consider a prae-dilator $T$. We build a sequence of good Bachmann-Howard systems by setting
 \begin{align*}
  (X_0,\iota_{X_0},L_{X_0})&:=(\emptyset,\iota_\emptyset,L_\emptyset),\\
  (X_{n+1},\iota_{X_{n+1}},L_{X_{n+1}})&:=(\vartheta_T(X_n),\iota_{\vartheta_T(X_n)},L_{\vartheta_T(X_n)}).
 \end{align*}
 Define the order $\bh(T)$ as the direct limit of the system $(X_n,\iota_{X_n}:X_n\rightarrow X_{n+1})_{n\in\omega}$. It comes with embeddings $j_{X_n}:X_n\rightarrow\bh(T)$ that satisfy $j_{X_{n+1}}\circ \iota_{X_n}=j_{X_n}$.
\end{definition}

As explained in the introduction, the present paper is supposed to be formalized in primitive recursive set theory ($\prs$). Let us briefly discuss the formalization of the above constructions (more details can be found in \cite[Sections~1.1,~1.2,~2.2]{freund-thesis}): Given a primitive recursive family $(T^u)_{u\in\mathbb V}$ of prae-dilators, it is straightforward to see that the transformation $(u,X)\mapsto\vartheta_{T^u}(X)$ is a primitive recursive set function, and that the properties from the previous section can be established in~$\prs$. Write $X_n^u$ for the Bachmann-Howard systems from the above definition, constructed with respect to $T^u$. Invoking primitive recursion along the ordinals we see that $(u,n)\mapsto X^u_n$ is a primitive recursive set function. It follows that the transformation of $u$ into the underlying set of the direct limit $\bh(T^u)$ is primitive recursive as well, since the latter can be explicitly represented by
\begin{equation*}
 \bh(T^u)=\{(n,s)\,|\,s\in X^u_{n+1}\land s\notin\rng(\iota_{X^u_n})\}.
\end{equation*}
Similarly, one checks that the universal property is witnessed by a primitive recursive transformation (see~\cite[Lemma~2.2.17]{freund-thesis}). In particular we can use the universal property (in the category of sets) to construct the limit order on $\bh(T^u)$. Thus we finally learn that $u\mapsto(\bh(T^u),<_{\bh(T^u)})$ is a primitive recursive set function. Let us now come to the first of our main results:

\begin{theorem}\label{thm:bhT-fixed point}
 For each prae-dilator $T$, the order $\bh(T)$ is a Bachmann-Howard fixed point of $T$.
\end{theorem}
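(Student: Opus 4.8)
The plan is to define the collapse $\vartheta:T_{\bh(T)}\rightarrow\bh(T)$ by gluing together the stage-wise collapses $\vartheta_{X_n}:T_{X_n}\rightarrow X_{n+1}$ from Definition~\ref{def:collapse-approximations}, and then to deduce the two clauses of Definition~\ref{def:bachmann-howard-collapse} from Proposition~\ref{prop:collapses-admissible} by transport along the limit embeddings $j_{X_n}$. First I would show that every $\sigma\in T_{\bh(T)}$ factors through some finite stage, i.e.\ that $\sigma=T_{j_{X_n}}(\sigma_n)$ for some $n$ and some $\sigma_n\in T_{X_n}$. This uses the support condition from Definition~\ref{def:prae-dilator}: the finite set $\supp^T_{\bh(T)}(\sigma)$ lies in $\rng(j_{X_n})$ for $n$ large enough, since $\bh(T)$ is the direct limit of the $X_n$; one then factors the inclusion $\iota_\sigma$ through the injective embedding $j_{X_n}$ and applies functoriality to the witness $\sigma\in\rng(T_{\iota_\sigma})$. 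Recall here that $T_{j_{X_n}}$ is again an order embedding, hence injective, because $j_{X_n}$ is.

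On such a representative I would set $\vartheta(\sigma):=j_{X_{n+1}}(\vartheta_{X_n}(\sigma_n))$. The central point is well-definedness. Since $T_{j_{X_m}}$ is injective, if $\sigma$ factors through stages $n\leq m$ then $\sigma_m=T_{\iota_{X_{m-1}}\circ\cdots\circ\iota_{X_n}}(\sigma_n)$, so it suffices to check that the assigned value is unchanged when passing from stage $n$ to stage $n+1$. This is exactly Proposition~\ref{prop:collapses-commute}, which gives $\vartheta_{X_{n+1}}(T_{\iota_{X_n}}(\sigma_n))=\iota_{X_{n+1}}(\vartheta_{X_n}(\sigma_n))$, combined with the cocone identity $j_{X_{n+2}}\circ\iota_{X_{n+1}}=j_{X_{n+1}}$ from Definition~\ref{def:bhT}.

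It then remains to verify clauses~(i) and~(ii) of Definition~\ref{def:bachmann-howard-collapse}. For both I would fix a common stage $n$ together with representatives $\sigma_n,\tau_n\in T_{X_n}$, use naturality of $\supp^T$ to rewrite $\supp^T_{\bh(T)}(\sigma)=[j_{X_n}]^{<\omega}(\supp^T_{X_n}(\sigma_n))=[j_{X_{n+1}}]^{<\omega}([\iota_{X_n}]^{<\omega}(\supp^T_{X_n}(\sigma_n)))$, and exploit that $j_{X_{n+1}}$ is an order embedding, so that it both preserves and reflects the preorder $\lef$ (the latter because $\lef$ is defined purely in terms of $<$). Clause~(ii) then follows by applying $j_{X_{n+1}}$ to the inequality $[\iota_{X_n}]^{<\omega}(\supp^T_{X_n}(\sigma_n))\lef_{\vartheta_T(X_n)}\vartheta_{X_n}(\sigma_n)$ supplied by Proposition~\ref{prop:collapses-admissible}(ii). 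For clause~(i), injectivity of $T_{j_{X_n}}$ turns $\sigma<_{T_{\bh(T)}}\tau$ into $\sigma_n<_{T_{X_n}}\tau_n$, reflecting $\lef$ through $j_{X_{n+1}}$ turns the hypothesis into $[\iota_{X_n}]^{<\omega}(\supp^T_{X_n}(\sigma_n))\lef_{\vartheta_T(X_n)}\vartheta_{X_n}(\tau_n)$, and Proposition~\ref{prop:collapses-admissible}(i) yields $\vartheta_{X_n}(\sigma_n)<_{\vartheta_T(X_n)}\vartheta_{X_n}(\tau_n)$, which $j_{X_{n+1}}$ carries up to $\vartheta(\sigma)<_{\bh(T)}\vartheta(\tau)$.

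I expect the main obstacle to be the bookkeeping in the factoring-and-well-definedness step: one must ensure that representatives at different stages are genuinely linked by the transition maps $T_{\iota_{X_n}}$ rather than merely sharing an image, and that the finite-set comparison $\lef$ is transferred in the correct direction by the embeddings $j_{X_{n+1}}$ (which by themselves only reflect $<$). Once this reflection is made precise, everything else is a routine transport of Proposition~\ref{prop:collapses-admissible} along the limit cocone.
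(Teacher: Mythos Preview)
Your proposal is correct and follows essentially the same route as the paper: factor every element of $T_{\bh(T)}$ through some $T_{X_n}$ via the support condition, glue the maps $j_{X_{n+1}}\circ\vartheta_{X_n}$ using Proposition~\ref{prop:collapses-commute}, and transport the two clauses of Definition~\ref{def:bachmann-howard-collapse} from Proposition~\ref{prop:collapses-admissible} along the embeddings $j_{X_{n+1}}$ using naturality of $\supp^T$. The only cosmetic difference is that the paper packages the well-definedness step as an invocation of the universal property of the direct limit $T_{\bh(T)}$, whereas you work explicitly with representatives; these are equivalent formulations of the same argument.
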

\begin{proof}
 In order to construct a Bachmann-Howard collapse $\vartheta:T_{\bh(T)}\rightarrow\bh(T)$ we will exploit the fact that $T_{\bh(T)}$ is a direct limit of the system
 \begin{equation*}
 (T_{X_n},T_{\iota_{X_n}}:T_{X_n}\rightarrow T_{X_{n+1}})_{n\in\omega}.
 \end{equation*}
Indeed, Girard's original definition explicitly demands that (prae-)dilators preserve direct limits. Since we have worked with a different formulation of the definition we shall give a short proof of this fact: Consider an arbitrary $\sigma\in T_{\bh(T)}$. Since the support $\supp^T_{\bh(T)}(\sigma)$ is a finite subset of $\bh(T)$ it is contained in the range of some embedding $j_{X_n}$. Using clause~(ii) of Definition~\ref{def:prae-dilator} we can infer that $\sigma$ lies in the range of $T_{j_{X_n}}$. Thus we have established
\begin{equation*}
 T_{\bh(T)}=\bigcup_{n\in\omega}\rng(T_{j_{X_n}}),
\end{equation*}
which ensures that $T_{\bh(T)}$, together with the functions $T_{j_{X_n}}:T_{X_n}\rightarrow T_{\bh(T)}$, is the desired direct limit (both in the category of linear orders and in the category of sets). Relying on Definition~\ref{def:collapse-approximations}, let us now consider the functions
 \begin{equation*}
  j_{X_{n+1}}\circ\vartheta_{X_n}:T_{X_n}\rightarrow\bh(T).
 \end{equation*}
 We can use Proposition~\ref{prop:collapses-commute} to compute
 \begin{multline*}
  (j_{X_{n+2}}\circ\vartheta_{X_{n+1}})\circ T_{\iota_{X_n}}=j_{X_{n+2}}\circ(\vartheta_{\vartheta_T(X_n)}\circ T_{\iota_{X_n}})=\\
  =j_{X_{n+2}}\circ(\iota_{\vartheta_T(X_n)}\circ\vartheta_{X_n})=(j_{X_{n+2}}\circ\iota_{X_{n+1}})\circ\vartheta_{X_n}=j_{X_{n+1}}\circ\vartheta_{X_n}.
 \end{multline*}
 Now the universal property of $T_{\bh(T)}$ yields a function
 \begin{equation*}
  \vartheta:T_{\bh(T)}\rightarrow\bh(T)\qquad\text{with}\qquad\vartheta\circ T_{j_{X_n}}=j_{X_{n+1}}\circ\vartheta_{X_n}.
 \end{equation*}
 We have to verify the conditions from Definition~\ref{def:bachmann-howard-collapse}: Aiming at condition~(i), consider elements $\sigma,\tau\in T_{\bh(T)}$ with
 \begin{equation*}
  \sigma<_{T_{\bh(T)}}\tau\qquad\text{and}\qquad\supp^T_{\bh(T)}(\sigma)\lef_{\bh(T)}\vartheta(\tau).
 \end{equation*}
 Pick $n$ large enough to write $\sigma=T_{j_{X_n}}(\sigma_0)$ and $\tau=T_{j_{X_n}}(\tau_0)$ with $\sigma_0,\tau_0\in T_{X_n}$. Then we have $\sigma_0<_{T_{X_n}}\tau_0$, as well as
 \begin{multline*}
  [j_{X_{n+1}}]^{<\omega}\circ[\iota_{X_n}]^{<\omega}(\supp^T_{X_n}(\sigma_0))=[j_{X_n}]^{<\omega}(\supp^T_{X_n}(\sigma_0))=\\
  =\supp^T_{\bh(T)}(T_{j_{X_n}}(\sigma_0))\lef_{\bh(T)}\vartheta(\tau)=\vartheta\circ T_{j_{X_n}}(\tau_0)=j_{X_{n+1}}\circ\vartheta_{X_n}(\tau_0).
 \end{multline*}
 Since $j_{X_{n+1}}$ is an order embedding we obtain $[\iota_{X_n}]^{<\omega}(\supp^T_{X_n}(\sigma_0))\lef_{X_{n+1}}\vartheta_{X_n}(\tau_0)$. Now Proposition~\ref{prop:collapses-admissible} yields $\vartheta_{X_n}(\sigma_0)<_{X_{n+1}}\vartheta_{X_n}(\tau_0)$ and then
 \begin{equation*}
  \vartheta(\sigma)=j_{X_{n+1}}\circ\vartheta_{X_n}(\sigma_0)<_{\bh(T)}j_{X_{n+1}}\circ\vartheta_{X_n}(\tau_0)=\vartheta(\tau).
 \end{equation*}
 To establish condition~(ii) of Definition~\ref{def:bachmann-howard-collapse} we again write $\sigma=T_{j_{X_n}}(\sigma_0)$. By Proposition~\ref{prop:collapses-admissible} we have $[\iota_{X_n}]^{<\omega}(\supp^T_{X_n}(\sigma_0))\lef_{X_{n+1}}\vartheta_{X_n}(\sigma_0)$. This implies
 \begin{multline*}
  \supp^T_{\bh(T)}(\sigma)=\supp^T_{\bh(T)}(T_{j_{X_n}}(\sigma_0))=[j_{X_n}]^{<\omega}(\supp^T_{X_n}(\sigma_0))=\\
  =[j_{X_{n+1}}]^{<\omega}\circ[i_{X_n}]^{<\omega}(\supp^T_{X_n}(\sigma_0))\lef_{\bh(T)}j_{X_{n+1}}\circ\vartheta_{X_n}(\sigma_0)=\vartheta\circ T_{j_{X_n}}(\sigma_0)=\vartheta(\sigma),
 \end{multline*}
 just as required.
\end{proof}

The previous results were formulated for arbitrary prae-dilators, whether or not they preserve well-foundedness. Restricting our attention to dilators, we obtain a more explicit version of the Bachmann-Howard principle:

\begin{definition}
 The predicative Bachmann-Howard principle is the assertion that $\bh(T)$ is well-founded for any dilator $T$.
\end{definition}

The nomenclature alludes to the view that the construction of $\bh(T)$ is predicatively acceptable, since it is realized by a primitive recursive set function (cf.~\cite{feferman-predicative-set}). To avoid misunderstanding we point out that the well-foundedness of $\bh(T)$ cannot be established by predicative means: Indeed, we will see that the predicative Bachmann-Howard principle is equivalent to $\Pi^1_1$-comprehension. This equivalence also ensures that the predicative Bachmann-Howard principle is sound, which is not trivial at all (in general, well-foundedness is not preserved under direct limits). Theorem~\ref{thm:bhT-fixed point} shows that the predicative Bachmann-Howard principle implies its abstract counterpart. The converse implication follows from the fact that $\bh(T)$ is the minimal Bachmann-Howard fixed point:

\begin{theorem}\label{thm:bhT-minimal}
 Consider a prae-dilator $T$. The order $\bh(T)$ can be embedded into any Bachmann-Howard fixed point of $T$.
\end{theorem}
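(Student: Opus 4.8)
The plan is to construct the embedding level by level along the diagram defining $\bh(T)$ and then to glue by the universal property of the direct limit. Fix a Bachmann-Howard fixed point $Y$ of $T$, witnessed by a Bachmann-Howard collapse $\vartheta^Y\colon T_Y\to Y$. Using the good Bachmann-Howard systems $(X_n,\iota_{X_n},L_{X_n})$ of Definition~\ref{def:bhT}, I would define order embeddings $h_n\colon X_n\to Y$ by recursion on $n$, with the coherence requirement $h_{n+1}\circ\iota_{X_n}=h_n$. The base case $h_0\colon\emptyset\to Y$ is forced. Given $h_n$, the natural candidate is $h_{n+1}\colon\vartheta_T(X_n)\to Y$ with $h_{n+1}(\vartheta\sigma):=\vartheta^Y(T_{h_n}(\sigma))$ for $\sigma\in T_{X_n}$, that is: transport $\sigma$ along the functor and collapse inside $Y$. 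Once the $h_n$ are known to be coherent embeddings, they induce a single map $h\colon\bh(T)\to Y$ with $h\circ j_{X_n}=h_n$; since $\bh(T)$ is covered by the ranges of the $j_{X_n}$ and each $h_n$ is an embedding, $h$ will be the required embedding of $\bh(T)$ into $Y$.

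First I would verify coherence. Unfolding $\iota_{X_n}(\vartheta\rho)=\vartheta\,T_{\iota_{X_{n-1}}}(\rho)$ for $\vartheta\rho\in X_n=\vartheta_T(X_{n-1})$ and using functoriality of $T$, one computes $h_{n+1}(\iota_{X_n}(\vartheta\rho))=\vartheta^Y(T_{h_n\circ\iota_{X_{n-1}}}(\rho))$, which equals $\vartheta^Y(T_{h_{n-1}}(\rho))=h_n(\vartheta\rho)$ by the induction hypothesis $h_n\circ\iota_{X_{n-1}}=h_{n-1}$. This is a routine term calculation, so that $h_{n+1}\circ\iota_{X_n}=h_n$ holds as soon as the previous level is coherent.

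The heart of the argument is to show that $h_{n+1}$ is an order embedding; as both orders are linear (Lemma~\ref{lem:collapse-linear}), it suffices to prove $\vartheta\sigma<_{\vartheta_T(X_n)}\vartheta\tau\Rightarrow h_{n+1}(\vartheta\sigma)<_Y h_{n+1}(\vartheta\tau)$, which I would do by induction on $L_{\vartheta_T(X_n)}(\vartheta\sigma)+L_{\vartheta_T(X_n)}(\vartheta\tau)$, distinguishing the two clauses of Definition~\ref{def:collapse-bh-system-inequality}. In clause~(i) we have $\sigma<_{T_{X_n}}\tau$, whence $T_{h_n}(\sigma)<_{T_Y}T_{h_n}(\tau)$ because $h_n$ is an embedding; to apply clause~(i) of $\vartheta^Y$ it remains to check $\supp^T_Y(T_{h_n}(\sigma))\lef_Y h_{n+1}(\vartheta\tau)$. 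By naturality of $\supp^T$ this support is $[h_n]^{<\omega}(\supp^T_{X_n}(\sigma))$, and the hypothesis $[\iota_{X_n}]^{<\omega}(\supp^T_{X_n}(\sigma))\lef_{\vartheta_T(X_n)}\vartheta\tau$ together with $L_{\vartheta_T(X_n)}(\iota_{X_n}(x))=L_{X_n}(x)<L_{\vartheta_T(X_n)}(\vartheta\sigma)$ lets me apply the induction hypothesis to $\iota_{X_n}(x)<_{\vartheta_T(X_n)}\vartheta\tau$ for each $x\in\supp^T_{X_n}(\sigma)$; coherence then turns the resulting inequalities into $h_n(x)<_Y h_{n+1}(\vartheta\tau)$, as needed. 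Clause~(ii) is symmetric: it furnishes an $x\in\supp^T_{X_n}(\tau)$ with $\vartheta\sigma\leq_{\vartheta_T(X_n)}\iota_{X_n}(x)$, the induction hypothesis (again licensed by the length drop) and coherence give $h_{n+1}(\vartheta\sigma)\leq_Y h_n(x)$, while clause~(ii) of $\vartheta^Y$ yields $h_n(x)<_Y h_{n+1}(\vartheta\tau)$, so transitivity closes the case.

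The step I expect to be the main obstacle is precisely this order-preservation of $h_{n+1}$, since it must weave together both clauses of the order on $\vartheta_T(X_n)$, both clauses of the collapse on $Y$, naturality of $\supp^T$, and the coherence identity, all inside a single length induction; tracking which length sum strictly decreases at each appeal to the induction hypothesis is where the care lies. By comparison, coherence and the final gluing through the universal property are routine, and injectivity of $h$ is automatic from strict monotonicity into a linear order.
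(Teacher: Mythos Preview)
Your proposal is correct and follows essentially the same approach as the paper: the definition $h_{n+1}(\vartheta\sigma)=\vartheta^Y(T_{h_n}(\sigma))$, the coherence calculation via functoriality, the length induction for order-preservation split into the two clauses of Definition~\ref{def:collapse-bh-system-inequality}, and the final gluing through the universal property all match the paper's argument. The only cosmetic difference is that the paper packages the inductive step by naming an embedding $h_X$ with $h_X^\vartheta\circ\iota_X=h_X$ a \emph{Bachmann-Howard interpretation} and proving once that this property propagates from $X$ to $\vartheta_T(X)$, whereas you carry the index $n$ explicitly.
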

\begin{proof}
 Let $Y$ be a Bachmann-Howard fixed point of $T$, witnessed by a Bachmann-Howard collapse $\vartheta_Y:T_Y\rightarrow Y$. Given a good Bachmann-Howard system $(X,\iota_X,L_X)$ and an embedding $h_X:X\rightarrow Y$, we can define a function $h_X^\vartheta:\vartheta_T(X)\rightarrow Y$ by
 \begin{equation*}
  h_X^\vartheta(\vartheta\sigma)=\vartheta_Y(T_{h_X}(\sigma)).
 \end{equation*}
 If we have
 \begin{equation*}
  h_X^\vartheta\circ\iota_X=h_X,
 \end{equation*}
 then $h_X$ is called a Bachmann-Howard interpretation of $X$. The main step of the present proof is to establish the following claim: If $h_X$ is a Bachmann-Howard interpretation of $X$, then $h_X^\vartheta$ is a Bachmann-Howard interpretation of $\vartheta_T(X)$ (in particular it is an order embedding). Based on this claim we can conclude as follows: Clearly the empty map $h_{X_0}:X_0=\emptyset\rightarrow Y$ is a Bachmann-Howard interpretation of $X_0$. Iteratively we can then construct Bachmann-Howard interpretations
 \begin{equation*}
h_{X_{n+1}}:=h_{X_n}^\vartheta:X_{n+1}\rightarrow Y  
 \end{equation*}
of the orders from Definition~\ref{def:bhT}. The definition of Bachmann-Howard interpretation ensures $h_{X_{n+1}}\circ\iota_{X_n}=h_{X_n}$. Thus the universal property of the limit~$\bh(T)$ allows us to glue the embeddings $h_{X_n}:X_n\rightarrow Y$ to the desired embedding of $\bh(T)$ into~$Y$. In order to establish the open claim we consider a Bachmann-Howard interpretation $h_X:X\rightarrow Y$. The implication
 \begin{equation*}
  s<_{\vartheta_T(X)}t\rightarrow h_X^\vartheta(s)<_Y h_X^\vartheta(t)
 \end{equation*}
 can be established by induction on $L_{\vartheta_T(X)}(s)+L_{\vartheta_T(X)}(t)$. Let us first consider the case where $s=\vartheta\sigma<_{\vartheta_T(X)}\vartheta\tau=t$ holds because of
 \begin{equation*}
  \sigma<_{T_X}\tau\qquad\text{and}\qquad [\iota_X]^{<\omega}(\supp^T_X(\sigma))\lef_{\vartheta_T(X)}t.
 \end{equation*}
Then we get $T_{h_X}(\sigma)<_{T_Y} T_{h_X}(\tau)$. Also recall that $L_{\vartheta_T(X)}(\iota_X(x))<L_{\vartheta_T(X)}(s)$ holds for any $x\in\supp^T_X(\sigma)$. Thus the definition of Bachmann-Howard interpretation and the induction hypothesis yield
 \begin{multline*}
  \supp^T_Y(T_{h_X}(\sigma))=[h_X]^{<\omega}(\supp^T_X(\sigma))=\\
  =[h_X^\vartheta]^{<\omega}\circ[\iota_X]^{<\omega}(\supp^T_X(\sigma))\lef_Y=h_X^\vartheta(t)=\vartheta_Y(T_{h_X}(\tau)).
 \end{multline*}
 In view of Definition~\ref{def:bachmann-howard-collapse} we obtain the desired inequality
 \begin{equation*}
  h_X^\vartheta(s)=\vartheta_Y(T_{h_X}(\sigma))<_Y \vartheta_Y(T_{h_X}(\tau))=h_X^\vartheta(t).
 \end{equation*}
 Next, assume that $s=\vartheta\sigma<_{\vartheta_T(X)}\vartheta\tau=t$ holds because of $\tau<_{T_X}\sigma$ (which is in fact redundant) and $s\leq^{\operatorname{fin}}_{\vartheta_T(X)}[\iota_X]^{<\omega}(\supp^T_X(\tau))$. Parallel to the above we obtain
 \begin{equation*}
  h_X^\vartheta(s)\leq^{\operatorname{fin}}_Y\supp^T_Y(T_{h_X}(\tau)).
 \end{equation*}
Since Definition~\ref{def:bachmann-howard-collapse} provides the inequality
\begin{equation*}
 \supp^T_Y(T_{h_X}(\tau))\lef_Y\vartheta_Y(T_{h_X}(\tau))=h_X^\vartheta(t),
\end{equation*}
we can infer $h_X^\vartheta(s)<_Yh_X^\vartheta(t)$ by transitivity. So far we have established that $h_X^\vartheta:\vartheta_T(X)\rightarrow Y$ is an embedding. To conclude that it is a Bachmann-Howard interpretation we consider the function $h^{\vartheta^2}_X:=(h_X^\vartheta)^\vartheta:\vartheta_T(\vartheta_T(X))\rightarrow Y$ and compute
 \begin{equation*}
  h^{\vartheta^2}_X\circ\iota_{\vartheta_T(X)}(\vartheta\sigma)=h^{\vartheta^2}_X(\vartheta\, T_{\iota_X}(\sigma))=\vartheta_Y(T_{h_X^\vartheta}\circ T_{\iota_X}(\sigma))=\vartheta_Y(T_{h_X}(\sigma))=h_X^\vartheta(\vartheta\sigma),
 \end{equation*}
 using the assumption that $h_X$ is a Bachmann-Howard interpretation.
\end{proof}

We can now complete the proof of Theorem~\ref{thm:pred-bhp}, which was stated in the introduction:

\begin{proof}
 In view of Theorem~\ref{thm:abstract-bhp} (which was established in~\cite{freund-equivalence}, based on similar results in~\cite{freund-bh-preprint,freund-thesis}) it suffices to show that the abstract and the predicative Bachmann-Howard principle are equivalent. To show that the former implies the latter we assume that $Y$ is a well-founded Bachmann-Howard fixed point of a given dilator $T$. By the previous theorem there is an order embedding of $\bh(T)$ into $Y$. This ensures that $\bh(T)$ is well-founded as well, as demanded by the predicative Bachmann-Howard principle. For the other direction we consider a dilator $T$ and assume that $\bh(T)$ is well-founded. From Theorem~\ref{thm:bhT-fixed point} we know that $\bh(T)$ is a Bachmann-Howard fixed point of $T$. Thus $\bh(T)$ itself serves as a witness for the abstract Bachmann-Howard principle.
\end{proof}

As explained in the introduction, the point of the predicative Bachmann-Howard principle is that it separates the construction of a Bachmann-Howard fixed point from the question of well-foundedness. Thus it splits the impredicative principle of $\Pi^1_1$-comprehension into a predicative construction and a statement about the preservation of well-foundedness.

\bibliographystyle{amsplain}
\bibliography{Bibliography_Freund}

\end{document}